\begin{document}

\title{Another inequality inspired by Erd\H{o}s}

\author{Barbora Bat\'\i kov\'a, Tom\'a\v s J.\  Kepka and Petr C.\ N\v emec}

\address{Barbora Bat\'\i kov\'a, Department of Mathematics, CULS,
Kam\'yck\'a 129, 165 21 Praha 6 - Suchdol, Czech Republic}
\email{batikova@tf.czu.cz}

\address{Tom\'a\v s J.\ Kepka, Faculty of Education, Charles University, M.\ Rettigov\'e 4, 116 39 Praha 1, Czech Republic}
\email{kepka@karlin.mff.cuni.cz}

\address{Petr C.\ N\v emec, Department of Mathematics, CULS,
Kam\'yck\'a 129, 165 21 Praha 6 - Suchdol, Czech Republic}
\email{nemec@tf.czu.cz}

\subjclass{11D75}

\keywords{positive integer, inequality}

\begin{abstract} One less common inequality in positive integers is carefully examined.
\end{abstract}

\maketitle

\newtheorem{obs}{Observation}[section]
\newtheorem{remark}[obs]{Remark}
\newtheorem{lemma}[obs]{Lemma}
\newtheorem{theorem}[obs]{Theorem}
\newtheorem{proposition}[obs]{Proposition}
\newtheorem{ex}[obs]{Exercise}
\newtheorem{const}[obs]{Construction}

In 1845, J.\ Bertrand (\cite{B}, pg.\ 129) conjectured such a statement: "For every integer $n$ greater than 6 there always exists at least one prime number contained between $n-2$ and $\frac n2$." Bertrand himself noticed that his proposition (postulate) was true for all $n$ such that $7\le n<6\cdot 10^6$ and believed it remained true for all $n\ge 7$. Seven years later, P.\ L.\ Tchebychef published a complete proof of the conjecture (\cite{T}) and various simplifications of this proof appeared through passing years. It was P.\ Erd\"os in 1932 (\cite{E}) who came up with an elegant and simple proof based on different ideas -- prime decompositions of binomial coefficients (a~thorough exposition of all this can be found in \cite{C}). The approach of Erd\"os may be considered the standard one nowadays. Two remarks could turn out to be of some interest, however. Firstly, a formally weaker form of the postulate is shown in \cite{E}. Namely, that for every positive integer $n$ there is at least one prime $p$ with $n<p\le 2n$. Of course, all odd primes $p$ are of the form $p=2n-1$, $n\ge 2$, whence it is not immediately visible why the lenient form should imply the more severe one. On the other hand, the original proof by Erd\"os can easily be completed to do the work. Secondly, the proof can be definitely viewed as elementary, but no way calculus-free. On the other hand, if some assertion is formulated within elementary arithmetic, it would be nice to have a really elementary proof. Nonetheless, we are convinced that the methods developed by Erd\"os can be transmogrified to yield demonstration(s) of the Bertrand postulate if not inside then at least close to Elementary Function Arithmetic -- one of the weakest fragments of arithmetic. 
In our effort to find such a proof we came across several unusual inequalities in positive integers. One of them (the inequality (1) below) was carefully examined in \cite{BKN}. Another kindred inequality (the inequality (2)) is treated in the present short note. Purely from methodological and didactical reasons, we include also a solution of these two inequalities using elementary calculus.

\section{Preliminaries}

In the sequel, $n$ stands for a positive integer. We denote by
\begin{enumerate}
\item[(a)] $z(n)$ the largest integer such that $3z(n)<2n$;
\item[(b)] $m(n)$ the largest integer such that $m(n)^2\le 2n$;
\item[(c)] $r(n)$ the least non-negative integer such that $n\le 2^{r(n)}$;
\item[(d)] $x(n)=z(n)-(r(n)+1)m(n)$;
\item[(e)] $y(n)=2^{2n-2z(n)-m(n)+2}-\cdot n^{m(n)-1}$;
\item[(f)] $c(n)=2n-2z(n)+2$;
\item[(g)] $a(n)=2^{c(n)-m(n)}$;
\item[(h)] $b(n)=n^{m(n)-1}$.
\end{enumerate}

In \cite{BKN}, we examined the inequality
\begin{equation}
z(n)-(r(n)+1)m(n)<0\,.
\end{equation}
i.e., the inequality $x(n)<0$) and we proved the following result (and a bit more):

\begin{theorem}\label{T1} Let $n$ be a positive integer. Then $x(n)=0$ if and only if $n\in\{436,451,529,545,546\}$ and $x(n)<0$ if and only if either $n\le435$ or $n=450$ or $513\le n\le528$.
\end{theorem}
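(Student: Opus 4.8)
The plan is to combine a soft asymptotic estimate for large $n$ with a finite---but carefully organised---verification for small $n$. First I would record the bounds that follow at once from (a)--(c): for every $n$,
\[
\tfrac{2n}{3}-1\le z(n)<\tfrac{2n}{3},\qquad \sqrt{2n}-1<m(n)\le\sqrt{2n},
\]
and $\log_2 n\le r(n)<\log_2 n+1$ for $n\ge 2$. Multiplying the last two estimates and subtracting from the first yields
\[
x(n)>\frac{2n}{3}-1-(\log_2 n+2)\sqrt{2n}\qquad(n\ge 2).
\]
With the substitution $t=\sqrt{2n}$ the right-hand side becomes $g(t)=\tfrac{t^2}{3}-1-(2\log_2 t+1)t$; one checks that $g'(t)>0$ once $t$ exceeds a small explicit value and that $g(t)>0$ beyond an explicit threshold, so there is an explicit $N_0$ with $x(n)>0$ for all $n\ge N_0$. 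One may take $N_0$ somewhere below $600$, or, for a tidy cut-off, $N_0=2^{10}+1$.

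For $n<N_0$ the statement is a finite check, and the point is to organise it so that only $O(\sqrt{N_0})$ values of $n$ need inspection. Both $m$ and $r$ are non-decreasing step functions: $m$ jumps by $1$ exactly at the integers $\lceil k^2/2\rceil$, and $r$ jumps by $1$ exactly at the integers $2^j+1$. On a maximal interval $I$ on which $m\equiv k$ and $r\equiv j$ one has $x(n)=z(n)-(j+1)k$ with $z$ non-decreasing, and since $z(n)=\lceil 2n/3\rceil-1$ is given by an explicit formula, on $I$ one can read off directly the (possibly empty) initial stretch where $x(n)<0$, the at most two points where $x(n)=0$, and the final stretch where $x(n)>0$. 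Going through the few dozen such intervals $I\subseteq[1,N_0)$ and patching the outcomes together produces exactly the set $\{436,451,529,545,546\}$ of zeros and the set $\{\,n:n\le 435\,\}\cup\{450\}\cup\{513,\dots,528\}$ of negatives. The exceptional behaviour is concentrated in the range $500\le n\le 550$, which is no coincidence: there a jump of $r$ (at $n=513$, since $2^9=512$) and a jump of $m$ (at $n=545$, since $33^2=1089$) fall close together, so the two correction terms in $x(n)$ grow almost simultaneously.

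The main obstacle is precisely that this transition region is tight, not generous. At $n=435$ one already has $x(n)=-1$, and no crude inequality can certify $x(n)<0$ there, since even the matching upper estimate $x(n)<\tfrac{2n}{3}-(\log_2 n+1)(\sqrt{2n}-1)$ is still positive for such $n$. So the delicate part is not any single inequality but the bookkeeping: every floor and ceiling must be kept honest throughout the finite range, because the assertion pins down the equalities $x(n)=0$ on the nose, and a single off-by-one slip in $z$, $m$ or $r$ near these values would corrupt the exceptional set. The large-$n$ part is by comparison routine; and if one wishes to avoid calculus, the same conclusion follows by an elementary induction on $n$ resting on the block structure above, comparing the net increase of $z$ over a block with the one-step drop in $x$ caused by an increment of $m$ or of $r$.
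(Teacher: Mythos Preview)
Your proposal is correct and follows essentially the same strategy as the paper's Section~3 proof: the lower bound $x(n)>\tfrac{2n}{3}-1-(\log_2 n+2)\sqrt{2n}$ is exactly the function $f$ of Lemma~\ref{32}, which the paper uses to conclude $x(n)>0$ for $n\ge 561$, and your organisation of the finite range into maximal blocks on which $m$ and $r$ are constant is precisely the device of Construction~\ref{R4}. The only noteworthy difference is that the paper also exploits the matching upper estimate $x(n)<\tfrac{2n}{3}-(\log_2 n+1)(\sqrt{2n}-1)$ (Lemma~\ref{33}) to dispose of all $n\le 384$ at once, leaving only the interval $385\le n\le 560$ (intervals $34$--$41$ of the table) for direct inspection; you mention this upper bound but dismiss it as unhelpful near $n=435$, and so implicitly carry the block-by-block check over the whole of $[1,N_0)$, which means several dozen intervals rather than eight.
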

Now, our aim is to solve (in positive integers) the inequality
\begin{equation}
2^{2n-2z(n)-m(n)+2}-n^{m(n)-1}<0
\end{equation}
and to find all positive integers $n$ for which the equality holds. In other words, we want to determine the sign of $y(n)$. The answer is given in Theorem \ref{T2}.

\begin {ex}\label{R1} \rm It is not complicated to check the following handy table (without using any artificial means):

\medskip
$\begin{array}{c|c|c|c|c|c|c|c|c|c|c|}
n&1&2&3&4&5&6&7&8&9&10\\x(n)&-1&-3&-5&-4&-9&-9&-8&-11&-15&-14\\c(n)-m(n)&3&2&4&4&3&5&5&4&6&6\\y(n)&7&2&13&12&-17&-4&-17&-496&-665&-936\end{array}$

\bigskip 
$\begin{array}{c|c|c|c|c|c|c|}
n&11&12&13&14&15&16\\x(n)&-13&-13&-17&-16&-21&-20\\c(n)-m(n)&6&8&7&7&9&9\\y(n)&-1267&-1472&-28433&-38288&-50113&-65024\end{array}$
\end{ex}

In order to justify the definition of $y(n)$ (see (e)), we need the following:

\begin{lemma}\label{L13} $c(n)-m(n)\ge2$ for every positive integer $n$ and $c(n)-m(n)=2$ only for $n=2$.\end{lemma}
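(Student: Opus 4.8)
The plan is to reformulate the inequality. Since $c(n)=2n-2z(n)+2$, the bound $c(n)-m(n)\ge 2$ is equivalent to $m(n)\le 2n-2z(n)=2\bigl(n-z(n)\bigr)$, and the equality $c(n)-m(n)=2$ means precisely $m(n)=2\bigl(n-z(n)\bigr)$. So the whole statement reduces to comparing $m(n)$ with $2\bigl(n-z(n)\bigr)$. I would then record two elementary facts coming straight from the definitions (a) and (b). First, $m(n)^2\le 2n$, hence $m(n)\le\sqrt{2n}$. Second, putting $t=2n-3z(n)$, the defining property of $z(n)$ (the largest integer with $3z(n)<2n$) forces $t\in\{1,2,3\}$, whence $3\bigl(n-z(n)\bigr)=3n-3z(n)=n+t\ge n+1$, i.e. $n-z(n)\ge\frac{n+1}{3}$.

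For the inequality itself, combining these two facts shows it suffices to check $\sqrt{2n}\le\frac{2(n+1)}{3}$ for $n\ge 2$, because then $m(n)\le\sqrt{2n}\le\frac{2(n+1)}{3}\le 2\bigl(n-z(n)\bigr)$. Squaring, $\sqrt{2n}\le\frac{2(n+1)}{3}$ is equivalent to $18n\le 4(n+1)^2$, i.e. $4n^2-10n+4\ge 0$, i.e. $(2n-1)(n-2)\ge 0$, which plainly holds for every integer $n\ge 2$. The one remaining value $n=1$ is read off the table in Exercise~\ref{R1}, where $c(1)-m(1)=4-1=3\ge 2$. This establishes $c(n)-m(n)\ge 2$ for all positive integers $n$.

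For the equality case, suppose $c(n)-m(n)=2$, so that $m(n)=2\bigl(n-z(n)\bigr)\ge\frac{2(n+1)}{3}$ by the second fact above. Together with $m(n)^2\le 2n$ this gives $\frac{4(n+1)^2}{9}\le 2n$, i.e. $(2n-1)(n-2)\le 0$, which forces $n\le 2$. The table in Exercise~\ref{R1} then finishes the job: $c(1)-m(1)=3\ne 2$, while $c(2)-m(2)=4-2=2$, so $n=2$ is the only solution. I do not expect a real obstacle here; the only place needing a little care is the bookkeeping around the floor-type definitions of $z(n)$ and $m(n)$ — specifically pinning down $t=2n-3z(n)\in\{1,2,3\}$ and the identity $3\bigl(n-z(n)\bigr)=n+t$, which is exactly what makes the quadratic estimates line up and also isolates the borderline value $n=2$.
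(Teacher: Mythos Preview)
Your proof is correct, but the route differs from the paper's. The paper disposes of $n\le 9$ (actually $n\le 16$) via the table, and for $n\ge 10$ shows that $(z(n)-1)^2>2n$ by a short chain of integer inequalities; this gives $z(n)-m(n)\ge 2$, and together with $2n-3z(n)\ge 1$ yields the stronger conclusion $c(n)-m(n)\ge 5$ for all $n\ge 10$. You instead sandwich the two sides of $m(n)\le 2(n-z(n))$ between the real-valued estimates $m(n)\le\sqrt{2n}$ and $2(n-z(n))\ge\frac{2(n+1)}{3}$, reducing the whole question to the quadratic $(2n-1)(n-2)\ge 0$. Your argument is cleaner in two respects: it handles all $n\ge 2$ uniformly (only $n=1$ needs the table for the inequality), and the equality case comes essentially for free because $n=2$ is a root of the quadratic. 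The paper's argument buys the sharper bound $c(n)-m(n)\ge 5$ for $n\ge 10$, but that extra strength is not used anywhere afterward.
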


\begin{proof} With regard to \ref{R1}, we can assume that $n\ge10$. Then $2n\ge20$, $z(n)\ge6$, $2z(n)\ge12$, $2z(n)^2\ge12z(n)\ge10z(n)+12>10z(n)+8$, $(z(n)-1)^2=z(n)^2-2z(n)+1=\frac{2z(n)^2-4z(n)+2}2>\frac{10z(n)+8-4z(n)+2}2=3z(n)+5>3(z(n)+1)\ge2n$. Now, $(z(n)-1)^2>2n$ implies $z(n)-1>m(n)$ and $z(n)-m(n)-1\ge1$. On the other hand, $2n-3z(n)\ge1$ and, consequently, $c(n)-m(n)-3=2n-2z(n)-m(n)-1=(2n-3z(n))+(z(n)-m(n)-1)\ge2$. Thus $c(n)-m(n)\ge5$.\end{proof}

One observes readily that the sequences $z(n),m(n),r(n)$ are non-decreasing, the sequence $b(n)$ is increasing and $y(n)=a(n)-b(n)$. Further, $c(1)=c(2)=4$, $c(3k)=c(3k+1)=c(3k+2)=2k+4$ for every $k\ge2$, $c(n+3k)=c(n)+2k$ and the sequence $c(n)$ is non-decreasing.

\section{An arithmetically pure solution of (2)}

\begin{obs} \rm In order to facilitate the calculation of $x(n),y(n)$, it is useful to distinguish intervals where the values of $m(n)$ and $r(n)$ are constant (and thus the values of $x(n)$ are non-decreasing).\newline
(i) For every positive integer $n$ we denote by $d_1(n)$ ($d_2(n)$, resp.) the least (largest, resp.) positive integer $n_1$ ($n_2$, resp.) such that $m(n_1)=m(n)=m(n_2)$. We observe easily that $d_1(n)=\lceil \frac{m(n)^2}2\rceil$ and $d_2(n)=\lfloor\frac{m(n)^2}2\rfloor+m(n)$. Besides, $d_2(n)-d_1(n)=m(n)$ if $m(n)$ is even  and $d_2(n)-d_1(n)=m(n)-1$ if $m(n)$ is odd.\newline
(ii) For every positive integer $n$ we denote by $e_1(n)$ ($e_2(n)$, resp.) the least (largest, resp.) positive integer $n_3$ ($n_4$, resp.) such that $r(n_3)=r(n)=r(n_4)$. We observe immediately that $e_1(1)=1=e_2(1)$ and $e_1(n)=2^{r(n)-1}+1$, $e_2(n)=2^{r(n)}$ for $n\ge2$. Besides, $e_2(n)-e_1(n)=2^{r(n)-1}-1$ for $n\ge2$.\newline
(iii) For every positive integer $n$ we put $f_1(n)=\max(d_1(n),e_1(n))$ and $f_2(n)=\min(d_2(n),e_2(n))$. That is, $f_1(1)=1=f_2(1)$ and $f_1(n)=\max\lceil\frac{m(n)^2}2\rceil,2^{r(n)-1}+1)$, $f_2(n)=\min(\lfloor\frac{m(n)^2}2\rfloor+m(n),2^{r(n)})$. Of course, $f_1(n)\le n\le f_2(n)$. Clearly, $f_1(n)$ ($f_2(n)$, resp.) is the least (largest, resp.) positive integer $k$ such that $m(k)=m(n)$ and $r(k)=r(n)$.\newline
(iv) We have $f_2(2^k)=2^k$ for every $k\ge0$. If $k\ge1$ is odd then $f_1(2^k)=2^k=f_2(2^k)$.\newline
(v) Clearly, if $n_1,n_2$ are positive integers then $m(n_1)=m(n_2)$ and $r(n_1)=r(n_2)$  iff $f_1(n_1)=f_1(n_2)$ iff $f_2(n_1)=f_2(n_2)$. Further, if $n$ is 
a~positive integer and $f_1(n)\le n_1\le n_2\le f_2(n)$ then $x(n_1)\le x(n_2)=x(n_1)+z(n_2)-z(n_1)$. \newline
\end{obs}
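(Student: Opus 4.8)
The plan is to derive every part of the Observation directly from the defining inequalities for $m(n)$ and $r(n)$, since each interval of constancy is simply the set of integer solutions of a pair of inequalities. For (i): $m(k)=m(n)$ holds precisely when $m(n)^2\le 2k<(m(n)+1)^2$, i.e. $\frac{m(n)^2}2\le k<\frac{(m(n)+1)^2}2$; the least such integer is $\lceil m(n)^2/2\rceil$, which is $d_1(n)$, while expanding $(m(n)+1)^2=m(n)^2+2m(n)+1$ and separating the cases ``$m(n)$ even'' and ``$m(n)$ odd'' shows that in both cases the largest integer strictly below $\frac{(m(n)+1)^2}2$ is $\lfloor m(n)^2/2\rfloor+m(n)$, which is $d_2(n)$. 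Subtracting and again splitting on the parity of $m(n)$ (so that $\lceil m(n)^2/2\rceil-\lfloor m(n)^2/2\rfloor$ equals $0$ or $1$) gives the two gap formulas. Part (ii) is the identical computation with $2^{r(n)-1}<n\le 2^{r(n)}$ for $n\ge 2$, together with the trivial case $r(1)=0$.

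For (iii) one only has to observe that $[f_1(n),f_2(n)]$ is, by construction, the intersection of the interval from (i) with the interval from (ii), each of which contains $n$; hence $f_1(n)\le n\le f_2(n)$ and $[f_1(n),f_2(n)]$ is exactly the set of $k$ with $m(k)=m(n)$ and $r(k)=r(n)$. For (iv): $e_2(2^k)=2^k$ and (for $k\ge1$) $e_1(2^k)=2^{k-1}+1$ are immediate from (ii) because $r(2^k)=k$; the inequality $d_2(2^k)\ge 2^k$ follows from $(m(2^k)+1)^2>2^{k+1}$; and when $k$ is odd the key point is that $k+1$ is even, so $2^{k+1}$ is a perfect square, $m(2^k)=2^{(k+1)/2}$, and therefore $d_1(2^k)=\lceil 2^{k+1}/2\rceil=2^k$. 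Combining these, $f_1(2^k)=2^k=f_2(2^k)$. Finally (v): since $d_1,d_2$ depend only on $m(n)$ and $e_1,e_2$ only on $r(n)$, equal pairs $(m(n),r(n))$ force $f_1$ and $f_2$ to agree; conversely $m(n)=m(f_i(n))$ and $r(n)=r(f_i(n))$ recover the pair from either endpoint, which yields the three equivalences. The monotonicity of $x$ on $[f_1(n),f_2(n)]$ is then immediate: on that interval $m$ and $r$ are constant, so $x(n_2)-x(n_1)=z(n_2)-z(n_1)$, which is $\ge 0$ because $z$ is non-decreasing.

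I expect the only genuine bookkeeping to be the parity case analysis with floors and ceilings in (i) — together with the small but essential remark in (iv) that an odd exponent $k$ makes $2^{k+1}$ a perfect square; everything else is routine substitution into the definitions combined with the already-noted monotonicity of $z$, $m$ and $r$.
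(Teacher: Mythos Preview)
Your argument is correct and is precisely the routine verification that the paper leaves to the reader: the Observation carries no separate proof in the paper, only the markers ``we observe easily'', ``we observe immediately'', ``clearly'', so what you have written is exactly the intended unpacking of the definitions. In particular your parity split in~(i) and the remark in~(iv) that odd $k$ makes $2^{k+1}$ a perfect square are the points the paper tacitly assumes.
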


\begin{const}\label{R4}
\rm  Put $g(1)=1$ ($=f_2(1)$) and $g(i+1)=f_2(g(i))+1$ for each $i\ge1$. Clearly, $g(i)\le n\le g(i+1)-1$, $i=1,2,\dots$, are maximal intervals where $r(n),m(n)$ are constant and if $g(i)\le n\le g(i+1)-1$ then $x(n)=z(n)-(r(g(i))+1)m(g(i))$.

The first 41 intervals $g(i)\le n\le g(i+1)-1$ are presented in the following list (the necessary calculations, although a bit fatigueant, are extremely easy):
\begin{enumerate}
\item If $n=1$ then $r(n)=0$, $m(n)=1$, $x(n)=-1$.
\item If $n=2$ then $r(n)=1$, $m(n)=2$, $x(n)=-3$.
\item If $3\le n\le4$ then $r(n)=2$, $m(n)=2$, $-5\le x(n)\le-4$.
\item If $5\le n\le7$ then $r(n)=3$, $m(n)=3$, $-9\le x(n)\le-8$.
\item If $n=8$ then $r(n)=3$, $m(n)=4$, $x(n)=-11$.
\item If $9\le n\le12$ then $r(n)=4$, $m(n)=4$, $-15\le x(n)\le-13$.
\item If $13\le n\le16$ then $r(n)=4$, $m(n)=5$, $-17\le x(n)\le-15$.
\item If $n=17$ then $r(n)=5$, $m(n)=5$, $x(n)=-19$.
\item If $18\le n\le24$ then $r(n)=5$, $m(n)=6$, $-25\le x(n)\le-21$.
\item If $25\le n\le31$ then $r(n)=5$, $m(n)=7$, $-26\le x(n)\le-22$.
\item If $n=32$ then $r(n)=5$, $m(n)=8$, $x(n)=-27$.
\item If $33\le n\le40$ then $r(n)=6$, $m(n)=8$, $-35\le x(n)\le-30$.
\item If $41\le n\le49$ then $r(n)=6$, $m(n)=9$, $-36\le x(n)\le-31$
\item If $50\le n\le60$ then $r(n)=6$, $m(n)=10$, $-37\le x(n)\le-31$.
\item If $61\le n\le64$ then $r(n)=6$, $m(n)=11$, $-37\le x(n)\le-35$.
\item If $65\le n\le71$ then $r(n)=7$, $m(n)=11$, $-45\le x(n)\le-41$.
\item If $72\le n\le84$ then $r(n)=7$, $m(n)=12$, $-49\le x(n)\le-41$.
\item If $85\le n\le97$ then $r(n)=7$, $m(n)=13$, $-48\le x(n)\le-40$.
\item If $98\le n\le112$ then $r(n)=7$, $m(n)=14$, $-47\le x(n)\le-38$.
\item If $113\le n\le127$ then $r(n)=7$, $m(n)=15$, $-45\le x(n)\le-36$.
\item If $n=128$ then $r(n)=7$, $m(n)=16$, $x(n)=-43$.
\item If $129\le n\le144$ then $r(n)=8$, $m(n)=16$, $-59\le x(n)\le-49$
\item If $145\le n\le161$ then $r(n)=8$, $m(n)=17$, $-57\le x(n)\le-46$.
\item If $162\le n\le180$ then $r(n)=8$, $m(n)=18$, $-55\le x(n)\le-43$.
\item If $181\le n\le199$ then $r(n)=8$, $m(n)=19$, $-51\le x(n)\le-39$.
\item If $200\le n\le220$ then $r(n)=8$, $m(n)=20$, $-47\le x(n)\le-34$.
\item If $221\le n\le241$ then $r(n)=8$, $m(n)=21$, $-42\le x(n)\le-29$.
\item If $242\le n\le256$ then $r(n)=8$, $m(n)=22$, $-37\le x(n)\le-28$.
\item If $257\le n\le264$ then $r(n)=9$, $m(n)=22$, $-49\le x(n)\le-45$.
\item If $265\le n\le287$ then $r(n)=9$, $m(n)=23$, $-54\le x(n)\le-39$.
\item If $288\le n\le312$ then $r(n)=9$, $m(n)=24$, $-49\le x(n)\le-33$.
\item If $313\le n\le337$ then $r(n)=9$, $m(n)=25$, $-42\le x(n)\le-26$.
\item If $338\le n\le364$ then $r(n)=9$, $m(n)=26$, $-35\le x(n)\le-18$.
\item If $365\le n\le391$ then $r(n)=9$, $m(n)=27$, $-27\le x(n)\le-10$.
\item If $392\le n\le420$ then $r(n)=9$, $m(n)=28$, $-19\le x(n)\le-1$.
\item If $421\le n\le449$ then $r(n)=9$, $m(n)=29$, $-10\le x(n)\le9$.
\item If $450\le n\le480$ then $r(n)=9$, $m(n)=30$, $-1\le x(n)\le19$.
\item If $481\le n\le511$ then $r(n)=9$, $m(n)=31$, $10\le x(n)\le30$.
\item If $n=512$ then $r(n)=9$, $m(n)=32$, $x(512)=21$.
\item If $513\le n\le544$ then $r(n)=10$, $m(n)=32$, $-11\le x(n)\le10$.
\item If $545\le n\le577$ then $r(n)=10$, $m(n)=32$, $0\le x(n)\le21$.
\end{enumerate}
\end{const}

\begin{lemma}\label{L4} Let $u,v,n$ be positive integers such that $u\le n\le v$ and $m(u)=m(v)=t$.\newline
{\rm(i)} $a(u)-b(v)\le y(n)=a(n)-b(n)=2^{c(n)-t}-n^{t-1}\le a(v)-b(u)$.\newline
{\rm(ii)} If $b(u)=u^{t-1}>2^{c(v)-t}=a(v)$ then $y(n)<0$.\newline
{\rm(iii)} If $b(v)=v^{t-1}<2^{c(u)-t}=a(u)$ then $y(n)>0$.\newline
{\rm(iv)} If $n+1\le v$ and $c(n)=c(n+1)$ then $y(n+1)<y(n)$.\end{lemma}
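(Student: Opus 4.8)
The plan is to reduce everything to two monotonicity facts already recorded just before Lemma~\ref{L4}: that the sequence $c(n)$ is non-decreasing and that the map $k\mapsto k^{t-1}$ is non-decreasing (and strictly increasing once $t\ge2$), together with the identity $y(n)=a(n)-b(n)=2^{c(n)-m(n)}-n^{m(n)-1}$.

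For (i) I would first note that, since $m$ is non-decreasing and $u\le n\le v$, the hypothesis $m(u)=m(v)=t$ forces $m(n)=t$ as well; hence on the whole interval $u\le k\le v$ we have $a(k)=2^{c(k)-t}$ and $b(k)=k^{t-1}$. From $c(u)\le c(n)\le c(v)$ we get $a(u)\le a(n)\le a(v)$, and from $u\le n\le v$ we get $b(u)\le b(n)\le b(v)$. Subtracting the appropriate inequalities yields $a(u)-b(v)\le a(n)-b(n)\le a(v)-b(u)$, which, combined with the displayed closed form for $y(n)$, is exactly (i). Parts (ii) and (iii) are then immediate consequences: if $b(u)>a(v)$ then $y(n)\le a(v)-b(u)<0$, and if $b(v)<a(u)$ then $y(n)\ge a(u)-b(v)>0$.

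For (iv), the point is that $n+1\le v$ forces $v\ge2$, hence $t=m(v)\ge m(2)=2$, so $k\mapsto k^{t-1}$ is \emph{strictly} increasing and $b(n+1)>b(n)$. Moreover $u\le n+1\le v$ gives $m(n+1)=t$, and the hypothesis $c(n)=c(n+1)$ yields $a(n+1)=2^{c(n+1)-t}=2^{c(n)-t}=a(n)$. Therefore $y(n+1)=a(n+1)-b(n+1)=a(n)-b(n+1)<a(n)-b(n)=y(n)$.

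There is no serious obstacle here; the only steps needing a moment's care are verifying that $m(k)=t$ holds throughout $u\le k\le v$ (so that the closed form $y(k)=2^{c(k)-t}-k^{t-1}$ is legitimate there) and ruling out the degenerate possibility $t=1$ in (iv), which cannot arise since $m(k)=1$ only for $k=1$ while $n+1\le v$ forces $v\ge2$.
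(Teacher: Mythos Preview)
Your proof is correct and follows essentially the same line as the paper's one-sentence argument, which simply invokes the facts that $c(n)$ is non-decreasing and $b(n)$ is (strictly) increasing. You have merely unpacked these monotonicity facts explicitly and added the harmless verification that $m(k)=t$ throughout $[u,v]$ and that $t\ge 2$ in part~(iv).
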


\begin{proof} It follows from the fact that the sequence $c(n)$ is non-decreasing and the sequence $b(n)$ is increasing.\end{proof} 

\begin{lemma}\label{L5} Let $n$ be a positive integer. Put $r(n)=s$ and $m(n)=t$. Then:\newline
{\rm(i)} If $c(n)\le s(t-1)+1$ then $y(n)<0$.\newline
{\rm(ii)} If $c(n)>s(t-1)+t$ then $y(n)>0$.
\end{lemma}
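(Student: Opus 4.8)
The plan is to argue directly from the identity $y(n)=a(n)-b(n)=2^{c(n)-t}-n^{t-1}$ (see the definitions of $a,b,c$, and Lemma~\ref{L4}(i)): proving $y(n)<0$ means proving $2^{c(n)-t}<n^{t-1}$, and proving $y(n)>0$ means proving $2^{c(n)-t}>n^{t-1}$. The only properties of $s=r(n)$ I expect to need are the two inequalities coming from its definition, namely $n\le2^s$ always and $2^{s-1}<n$ whenever $s\ge1$ (otherwise $s$ would not be least). I also record the trivial remark that $t=m(n)\ge2$ forces $2n\ge t^2\ge4$, hence $n\ge2$ and so $s\ge1$.

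For (i), assume $c(n)\le s(t-1)+1$. If $t=1$ the hypothesis reads $c(n)\le1$, which contradicts Lemma~\ref{L13} (giving $c(n)\ge m(n)+2=3$), so there is nothing to prove. If $t\ge2$, then $s\ge1$, and using the identity $s(t-1)+1-t=(s-1)(t-1)$ we get $c(n)-t\le(s-1)(t-1)$, hence
\[
2^{c(n)-t}\le 2^{(s-1)(t-1)}=(2^{s-1})^{t-1}<n^{t-1},
\]
where the final strict inequality holds because $2^{s-1}<n$ and $t-1\ge1$; thus $y(n)<0$. For (ii), assume $c(n)>s(t-1)+t$, i.e.\ $c(n)\ge s(t-1)+t+1$, so that $c(n)-t\ge s(t-1)+1$. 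Then
\[
2^{c(n)-t}\ge 2^{s(t-1)+1}=2\cdot(2^s)^{t-1}\ge 2\cdot n^{t-1}>n^{t-1}
\]
using $n\le2^s$ and $t-1\ge0$; thus $y(n)>0$. Here no case distinction is needed.

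I do not foresee a genuine obstacle: in each part the argument is a short chain of elementary inequalities. The only point that needs a moment's attention is the boundary case $t=1$ (with its companion $s=0$, $n=1$) in part (i), where the hypothesis turns out to be simply unsatisfiable by Lemma~\ref{L13}; and, more cosmetically, spotting the identity $s(t-1)+1-t=(s-1)(t-1)$, which is precisely what converts the hypothesis of (i) into the clean bound $c(n)-t\le(s-1)(t-1)$.
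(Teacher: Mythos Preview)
Your proof is correct and follows the same approach as the paper's: both parts rest on the bounds $2^{s-1}<n\le 2^{s}$ together with the algebraic identity $s(t-1)+1-t=(s-1)(t-1)$, yielding the same chain of inequalities. Your explicit handling of the vacuous case $t=1$ in part~(i) via Lemma~\ref{L13} is a small refinement the paper omits, but otherwise the two arguments are essentially identical.
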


\begin{proof} Since $r(n)=s$, we have $2^{s-1}<n\le 2^s$. If $c(n)\le st-s+1$ then $c(n)-t\le(s-1)(t-1)$, hence $2^{c(n)-m(n)}\le\left(2^{s-1}\right)^{t-1}<n^{t-1}$ and $y(n)<0$. If $c(n)>st-s+t$ then $s(t-1)<c(n)-t$, and therefore $0\le(2^s)^{t-1}-n^{t-1}<2^{c(n)-t}-n^{t-1}=y(n)$.\end{proof}

\begin{lemma}\label{L2}
If $n\ge1$ is such that $y(n)\le0$ then $x(n)\le-r(n)-3\le-6$.
\end{lemma}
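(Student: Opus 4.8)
The plan is to extract from $y(n)\le 0$ an upper bound for $c(n)$, convert it into an upper bound for $z(n)$, and then read the inequality for $x(n)$ straight off the definition $x(n)=z(n)-(r(n)+1)m(n)$, relying on the fact that the cross terms will cancel.

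Write $s=r(n)$ and $t=m(n)$. First I would apply Lemma \ref{L5}(ii) in contrapositive form: since $c(n)>s(t-1)+t$ would force $y(n)>0$, the hypothesis $y(n)\le 0$ yields
\[
c(n)\le s(t-1)+t=r(n)\bigl(m(n)-1\bigr)+m(n).
\]
Next I would record the crude estimate $z(n)\le c(n)-3$: since $z(n)$ is the largest integer with $3z(n)<2n$ we have $2n-3z(n)\ge 1$, hence $c(n)=2n-2z(n)+2=z(n)+(2n-3z(n))+2\ge z(n)+3$. Combining the two displayed facts gives $z(n)\le r(n)(m(n)-1)+m(n)-3$, and therefore
\[
x(n)=z(n)-(r(n)+1)m(n)\le r(n)\bigl(m(n)-1\bigr)+m(n)-3-(r(n)+1)m(n)=-r(n)-3,
\]
which is the first asserted inequality.

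For the second inequality $-r(n)-3\le -6$, i.e. $r(n)\ge 3$, I would observe that $y(n)\le 0$ already forces $n\ge 5$: the table in Exercise \ref{R1} gives $y(1)=7$, $y(2)=2$, $y(3)=13$, $y(4)=12$, all positive. Since $r$ is non-decreasing and $r(5)=3$, we conclude $r(n)\ge 3$, completing the argument.

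I do not expect a genuine obstacle: the computation is short once one spots that $c(n)$ is the right intermediary and that the blunt bound $z(n)\le c(n)-3$ — rather than anything sharper — is precisely what makes the $r(n)m(n)$ terms cancel exactly. The only care needed is to dispose of the tiny cases $n\le 4$ via the table, so that the bound $-r(n)-3\le -6$ is legitimate whenever $y(n)\le 0$.
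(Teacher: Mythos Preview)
Your proof is correct and follows essentially the same route as the paper's: both use $2n-3z(n)\ge 1$ to get $z(n)\le c(n)-3$, both extract $c(n)\le r(n)(m(n)-1)+m(n)$ from $y(n)\le 0$ (you via the contrapositive of Lemma~\ref{L5}(ii), the paper by writing out the one-line exponent comparison $2^{c(n)-m(n)}\le n^{m(n)-1}\le 2^{r(n)(m(n)-1)}$ directly), and both dispose of $n\le 4$ via the table in \ref{R1}. The only difference is your citation of \ref{L5}(ii) in place of the paper's inline inequality, which is a cosmetic packaging choice rather than a different idea.
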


\begin{proof}
In view of  \ref{R1}, we have $n\ge 5$. As $3z(n)\le2n-1$, we have $z(n)-m(n)+3\le z(n)-m(n)+2+2n-3z(n)=c(n)-m(n)$. However $y(n)\le0$, hence $2^{c(n)-m(n)}\le n^{m(n)-1}\le2^{r(n)(m(n)-1)}$ and $z(n)-m(n)+3\le c(n)-m(n)\le r(n)m(n)-r(n)$. Then $x(n)=z(n)-r(n)m(n)-m(n)\le-r(n)-3\le-6$.\end{proof}

\begin{proposition}\label{P1}
$y(n)>0$ for every $n\ge404$.
\end{proposition}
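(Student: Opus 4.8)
The plan is to derive everything from the sufficient condition of Lemma \ref{L5}(ii): it is enough to show that $c(n) > r(n)(m(n)-1) + m(n)$ for every $n \ge 404$. Two crude estimates carry most of the weight. Since $3z(n) < 2n$ we have $z(n) < \frac{2n}{3}$, hence $c(n) = 2n - 2z(n) + 2 > \frac{2n}{3}$. Since $m(n)^2 \le 2n$ we have $m(n) \le \sqrt{2n}$, and trivially $r(n)(m(n)-1) + m(n) < (r(n)+1) m(n)$ because $r(n) \ge 1$. Hence it would suffice to prove $\frac{2n}{3} \ge (r(n)+1)\sqrt{2n}$, i.e. $2n \ge 9(r(n)+1)^2$; I expect this to hold for all $n \ge 578$, leaving the finite range $404 \le n \le 577$ to be handled by hand.

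For $n \ge 578$ the inequality $2n \ge 9(r(n)+1)^2$ splits cleanly. If $578 \le n \le 1024$, then $r(n) = 10$ and $2n \ge 1156 > 1089 = 9\cdot 11^2 = 9(r(n)+1)^2$. If $n \ge 1025$, put $s = r(n) \ge 11$; then $2n > 2^s$, and a one-line induction shows $2^s \ge 9(s+1)^2$ for all $s \ge 11$ (base: $2^{11} = 2048 > 1296$; step: $2^{s+1} \ge 18(s+1)^2 \ge 9(s+2)^2$ since $2(s+1)^2 - (s+2)^2 = s^2 - 2 \ge 0$). In either case $\sqrt{2n} \ge 3(r(n)+1)$, so
\[
c(n) > \frac{2n}{3} = \frac{(\sqrt{2n})^2}{3} \ge (r(n)+1)\sqrt{2n} \ge (r(n)+1)m(n) > r(n)(m(n)-1) + m(n),
\]
and Lemma \ref{L5}(ii) gives $y(n) > 0$.

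For the band $404 \le n \le 577$ I would lean on Construction \ref{R4}: on each of its intervals numbered 35 through 41 the quantities $r(n)$ and $m(n)$ are constant and explicitly listed, while the sequence $c(n)$ is non-decreasing, so it is enough to check $c(n_0) > r(n_0)(m(n_0)-1) + m(n_0)$ at the left endpoint $n_0$ of each interval (taking $n_0 = 404$ on interval 35, since only $n \ge 404$ is relevant there). These are seven one-line computations; for instance on interval 35 one has $r = 9$, $m = 28$, so the right-hand side equals $271$, while $z(404) = 269$ gives $c(404) = 808 - 538 + 2 = 272 > 271$, and the remaining endpoints $n_0 = 421, 450, 481, 512, 513, 545$ come out with even more room. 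Combined with the preceding paragraph, this settles all $n \ge 404$.

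The one place requiring genuine care is the lower end of the band, essentially $404 \le n \le 420$: there $\frac{2n}{3}$ exceeds $r(n)(m(n)-1) + m(n)$ by the thinnest of margins, the general estimate is not yet in force, and one really must invoke the tabulated value $m(n) = 28$; this is also roughly where the present route first applies at all, since, e.g., $c(401) = 270 < 271 = r(401)(m(401)-1) + m(401)$, so Lemma \ref{L5}(ii) is silent at $n = 401$. A secondary annoyance is the jump of $r(n)$ at powers of $2$: just past $n = 512$ the term $(r(n)+1)\sqrt{2n}$ leaps upward, and $2^{r(n)}$ does not overtake $9(r(n)+1)^2$ until $r(n) = 11$, which is exactly why the large-$n$ argument is split at $n = 1025$ and the stretch $513 \le n \le 1024$ is absorbed partly by Construction \ref{R4} and partly by the bare inequality $2n \ge 1156 > 1089$.
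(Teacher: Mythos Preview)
Your argument is correct, but it takes a different route from the paper's own proof. The paper argues by contraposition through Lemma~\ref{L2}: if $y(n)\le 0$ then $x(n)\le -r(n)-3\le -12$ for $n\ge 404$, whereas Theorem~\ref{T1} together with Construction~\ref{R4} (and the single value $x(404)=-11$) give $x(n)\ge -11$ for all $n\ge 404$; the contradiction yields $y(n)>0$ in one stroke. You instead go straight for the sufficient condition of Lemma~\ref{L5}(ii), proving $c(n)>r(n)(m(n)-1)+m(n)$ via the crude bound $c(n)>\tfrac{2n}{3}\ge (r(n)+1)\sqrt{2n}$ once $2n\ge 9(r(n)+1)^2$ (valid for $n\ge 578$), and then disposing of $404\le n\le 577$ by seven left-endpoint checks against the table in~\ref{R4}. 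The trade-off is clear: the paper's proof is shorter because it cashes in the already-established Theorem~\ref{T1} and Lemma~\ref{L2}; yours is more self-contained, needing neither the classification of $x(n)$ nor Lemma~\ref{L2}, at the price of the explicit growth estimate and the handful of endpoint verifications. Your closing remark that the method first bites exactly at $n=404$ (since $c(401)=270<271$) nicely explains why the proposition is stated with that threshold.
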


\begin{proof}
Assume $n\ge404$. Then $r(n)\ge9$, $-r(n)-3\le-12<-11$. On the other hand, taking into account \ref{T1}, \ref{R4} and the fact that $x(404)=-11$, we have $-11\le x(n)$. Thus $-r(n)-3<x(n)$ and $y(n)>0$ by \ref{L2}.
\end{proof}

\begin{theorem}\label{T2} {\rm(i)} $y(n)<0$ if and only if either $5\le n\le 335$ or $338\le n\le 350$ or $365\le n\le368$.\newline
{\rm(ii)} $y(n)\ne0$ for every positive integer $n$.\newline
{\rm(iii)} $y(n)>0$ if and only if either $1\le n\le4$ or $336\le n\le337$ or $351\le n\le364$ or $n\ge369$.\end{theorem}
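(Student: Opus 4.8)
The statement is really a single claim --- the sign of $y(n)$ for every $n$ --- and I would prove it that way. By Proposition~\ref{P1} we have $y(n)>0$ for $n\ge404$, so it remains to settle $1\le n\le403$, a range covered by intervals (1)--(35) of Construction~\ref{R4}, on each of which $r(n)$ and $m(n)=t$ are constant. For $1\le n\le16$ the sign is read straight off the table in Exercise~\ref{R1} ($y>0$ for $n\le4$, $y<0$ for $5\le n\le16$). For the remaining intervals I would argue interval by interval, and inside a stubborn interval $c$-block by $c$-block, where a \emph{$c$-block} is a maximal run of consecutive $n$ (within one interval) on which $c(n)$ too is constant; such a block has length at most $3$, inside it $y$ is strictly decreasing by Lemma~\ref{L4}(iv), and across its right-hand boundary $c$ rises by exactly $2$, so $a(n)=2^{c(n)-t}$ is multiplied by $4$.

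On intervals (8)--(31), i.e.\ $17\le n\le312$, the plan is to show $y(n)<0$. The tool is Lemma~\ref{L4}(ii): on a stretch $[u,v]$ with $m\equiv t$ it suffices that $u^{t-1}>2^{c(v)-t}$. On most of these intervals the crude estimate with $u,v$ the endpoints of the whole interval already closes (there $n$ is comfortably large against $2^{r(n)}$); on the few where it is too tight at the right end, I would apply Lemma~\ref{L4}(ii) to each $c$-block $[u_k,v_k]$ separately and note that the slack $(t-1)\log_2 u_k-(c(v_k)-t)$ decreases with $k$ --- it gains $2$ per block from $c$ against only $(t-1)\log_2(1+3/u_k)<2$ from $u_k$, using $(1+3/n)^{t-1}<4$ in this range --- so a single comparison on the topmost $c$-block controls the interval. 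Dually, Lemma~\ref{L4}(iii), $v^{t-1}<2^{c(u)-t}$ (or Lemma~\ref{L5}(ii)), yields $y(n)>0$ on intervals (1)--(3) and on the part $392\le n\le403$ of interval (35), the analogous slack here being increasing so that the bottom of the stretch is the binding case.

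The substance is in intervals (32), (33), (34), i.e.\ $313\le n\le391$, where $y$ oscillates: negative up to $335$, positive at $336,337$, negative from $338$ (the relapse forced by $m$ jumping from $25$ to $26$ at the (32)/(33) boundary, where $c$ stays put, so $a$ merely halves while $b=n^{m-1}$ loses a whole exponent), positive on $351$--$364$, negative from $365$ (the (33)/(34) relapse, with $m$ passing from $26$ to $27$), positive from $369$ on. The governing fact is the interplay already isolated: inside a fixed interval $y$ drifts down within a $c$-block and leaps up at each $c$-jump, and the ratio $2^{c(n)-t}/n^{t-1}$ taken at the right (resp.\ left) endpoints of successive $c$-blocks is strictly increasing (same slack computation). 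Hence, for each of (32), (33), (34), one comparison at the left endpoint of the last negative $c$-block (yielding $y<0$ there, hence on all earlier blocks) and one at the right endpoint of the first positive $c$-block (yielding $y>0$ there, hence on all later blocks), together with Lemma~\ref{L4}(iv) inside each block, determine the sign throughout; the block endpoints come from Construction~\ref{R4} and the recursion $c(n+3)=c(n)+2$. Assembling the three ranges with Proposition~\ref{P1} and the initial table gives exactly the sets in (i) and (iii); (ii) follows automatically, every $y(n)$ having been certified nonzero (one may also note that $y(n)=0$ forces $n^{m(n)-1}$, hence $n$ --- since $m(n)\ge2$ for $n\ge2$ and $y(1)=7$ --- to be a power of $2$, and the powers of $2$ up to $403$ are among the cases already treated while $y(2^k)>0$ for $2^k\ge404$).

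The step I expect to be the genuine obstacle is arithmetical, not conceptual: at the sign changes the integers $2^{c(n)-m(n)}$ and $n^{m(n)-1}$ have on the order of two hundred binary digits and differ by less than a factor of $4$ --- at the decisive points (around $n=333$, $353$, $366$) they differ in $\log_2$ by only a small fraction of a unit --- so there is no one-line inequality, and each such comparison must be carried out as an honest but elementary estimate, for instance by trapping $n^{m(n)-1}$ between two consecutive powers of $2$ via a rational bound for $\log_2 n$ good to a couple of decimals, or by repeated squaring with explicit running error bounds. The rest is disciplined bookkeeping: checking $(1+3/n)^{m(n)-1}<4$ on the ranges where the ``monotone slack'' argument is used, and making sure that on every interval Lemma~\ref{L4}(ii),(iii) is applied to stretches --- the whole interval, or single $c$-blocks --- short enough that the estimate actually closes.
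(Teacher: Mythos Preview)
Your plan is correct and is essentially the paper's own proof: reduce to $17\le n\le403$ via Exercise~\ref{R1} and Proposition~\ref{P1}, then on each $(r,m)$-interval of Construction~\ref{R4} settle the sign with Lemmas~\ref{L4} and~\ref{L5}, the delicate numerical work being concentrated at the sign-change blocks near $n=333$--$337$, $350$--$353$, and $365$--$371$. Your ``monotone slack'' device --- that the defect $(t-1)\log_2 u_k-(c(v_k)-t)$ moves monotonically across $c$-blocks because $(1+3/n)^{m(n)-1}<4$ throughout the range --- is a clean systematization that the paper does not articulate; in return, the paper disposes of $n\le220$ in one line via Lemma~\ref{L5}(i) (checking $c(n)\le r(n)(m(n)-1)+1$ at the top of each interval), which is quicker than a block-by-block pass there. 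One verbal slip: at the $(32)/(33)$ boundary $b=n^{m-1}$ \emph{gains} a factor of $n$ (the exponent rises from $24$ to $25$), not loses one --- your conclusion about the relapse is nonetheless right, since $a$ halves while $b$ grows by a factor of roughly $n$.
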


\begin{proof} With respect to \ref{R1} and \ref{P1}, we can restrict ourselves to $17\le n\le403$. We will make use of \ref{R4} and compare the powers of 2 with those of $n$ (the necessary calculations are non-demanding in principle, nonetheless they might turn out to be somewhat tiresome occasionally -- an abacus and/or a homemade table of powers of 2 will help a lot). The proof will be divided into several parts.\newline
(i) From \ref{L5}(i) and the table in \ref{R4}, we see that $y(n)<0$ for $n\le220$ (i.e., for $i=1,\dots,26$).\newline
(ii) Let $221\le n\le241$. Then $r(n)=8$, $m(n)=21$ and $c(241)=164$. As $221^3>2^{23}$, we have $221^{20}=(221^3)^6\cdot221^2>221^{138}\cdot221^2>2^{153}$ and $y(n)<0$ for $221\le n\le241$ by \ref{L4}(ii).\newline
(iii) Let $242\le n\le 264$. Then $m(n)=22$, $c(264)=180$ and $242^4>2^{31}$. Thus $242^{21}=(242^4)^5\cdot242>2^{31\cdot5}\cdot242>2^{158}$ and $y(n)<0$ by \ref{L4}(ii).\newline
(iv) Let $265\le n\le 287$. Then $m(n)=23$, $c(287)=194$, $265^{22}>256^{22}=(2^8)^{22}=2^{176}>2^{194-23}$ and $y(n)<0$ by \ref{L4}(ii).\newline
(v) Let $288\le n\le312$. Then $r(n)=9$, $m(n)=24$ and $c(312)=212$. As $c(308)=c(306)=208\le 9\cdot23+1$, $y(n)<0$ for $288\le n\le308$ by \ref{L5}(i). Further, $309>2^8$ and $309^4>2^{33}$. Hence $309^{23}=(309^4)^5\cdot309^3>2{33\cdot5}\cdot2^{8\cdot3}>2^{188}$ and $y(n)<0$ for $309\le n\le312$ by \ref{L4}(ii).\newline
(vi) Let $313\le n\le337$. Then $r(n)=9$, $m(n)=25$ and $c(337)=228$. As $c(320)=216\le9\cdot24+1$, $y(n)<0$ for $313\le n\le320$ by \ref{L5}(i).
Further, $c(329)=222$ and (see (v)) $321^4>2^{33}$. Thus $320^{24}>2^{33\cdot6}$ and $y(n)<0$ for $320\le n\le329$ by \ref{L4}(ii). Moreover, $c(332)=224$ and $330^3>2^{25}$. Hence $330^{24}>2^{200}$ and $y(n)<0$ for $330\le n\le332$ by \ref{L4}(ii). Further, $333^8>1.5\cdot10^{20}>1.48\cdot10^{20}>2^{67}$ (we have $2^{67}=147,573,952,589,676,412,928$), and hence $333^{24}>2^{67\cdot3}=2^{201}$. As $c(333)=226$, $y(333)<0$ and $y(n)<0$ for $333\le n\le335$ by \ref{L4}(iv). Finally,  we have $m(337)=25$, $c(337)=c(336)=228$, $337^2<2^{17}$ and $337^5<2^{42}$. Thus $337^{24}<2^{42\cdot4}\cdot2^{17\cdot2}=2^{202}$ and $y(337)>0$. Then $y(336)>0$ by \ref{L4}(iv).\newline
(vii) Let $338\le n\le364$. Then $r(n)=9$ and $m(n)=26$. As $c(350)=236$ and $338^5>2^{42}$, we have $338^{25}>2^{42\cdot5}$ and $y(n)>0$ for $338\le n\le350$ by \ref{L4}(ii). As $353^2<2^{17}$ and $353^{15}<1.65\cdot10^{38}<1.7\cdot10^{38}<2^{127}$ (we have $2^{127}= 170,141,183,460,469,231,731,687,303,715,884,105,728$), $353^{25}=353^{15}\cdot(353^2)^5>2^{127}\cdot2^{17\cdot5}=2^{212}$, hence $y(353)>0$ and $y(n)>0$ for $351\le n\le353$ by \ref{L4}(iv). Further, $356^2<2^{17}$, hence $356^{25}<2^{17\cdot12}\cdot356<2^{213}$. As $c(354)=240$, $y(n)>0$ for $354\le n\le356$  by \ref{L4}(iii). However, $364^5<2^{43}$, and hence $364^{25}<2^{215}$. As $c(357)=242$, $y(n)>0$ for $357\le n\le364$ by \ref{L4}(iii). \newline
(viii) Let $365\le n\le 391$. Then $r(n)=9$ and $m(n)=27$. We have $c(368)=248$ and $365^2>2^{17}$. Thus $365^{26}>2^{17\cdot13}=2^{221}$ and $y(n)<0$ for $365\le n\le368$ by \ref{L4}(ii). Further, as $371^3<2^{26}$, $371^5<2^{43}$ and $371^9<1.4\cdot10^{23}<1.5\cdot10^{23}<2^{77}=151,115,727,451,828,646,838,272$, we have $371^{26}=(371^9)^2\cdot371^5\cdot371^3<2^{77\cdot2+43+26}=2^{223}$, hence $y(371)>0$ and $y(n)>0$ for $369\le n\le371$ by \ref{L4}(iv). Further, $386^5<2^{43}$, and hence $386^{26}<2^{43\cdot5}\cdot386<2^{224}$. As $c(372)=242$, we have $y(n)>0$ for $372\le n\le 386$ by \ref{L4}(iii).
Finally, we have $c(387)=262>9\cdot26+27$, $y(n)>0$ for $387\le n\le391$ by \ref{L5}(ii). \newline
(ix) Let $392\le n\le403$. Then $r(n)=9$ and $m(n)=28$. As $c(402)=c(403)=c(404)$ and $y(404)>0$, $y(n)>0$ for $n=402,403$ by \ref{L4}(iv). As $c(392)=264$ and $402^3<2^{26}$, we have $402^{27}<2^{26\cdot9}<2^{264-28}$ and $y(n)>0$ for $392\le n\le402$ by \ref{L4}(iii). \end{proof}

\section{A solution of (1) and (2) using elementary calculus}

\begin{remark}\label{31}
\rm Define a function
$$F(x)=\frac23x+A-B\sqrt{2x}+C\log_2x-\sqrt{2x}\log_2x\,,$$
where $A,B,C\in\mathbb R$.

It is an easy exercise to prove the following assertions. 
\begin{enumerate}
\item $\lim_{x\to\infty}F(x)=\infty$.
\item $F'(x)=\frac23-\frac B{\sqrt{2x}}+\frac C{x\ln2}-\frac{\ln x}{\sqrt{2x}\ln2}-\frac{\sqrt{2x}}{x\ln2}$, $\lim_{x\to\infty}F'(x)=\frac23$.
\item $F''(x)=\frac{\sqrt{x}\ln(2^Bx)-2\sqrt2C}{2\sqrt2x^2\ln2}$.
\item Functions $F,F',F''$ are continuous on $(0,\infty)$.
\item $F''(x)>0$ for $x\in(d,\infty)$, where $d\ge2$ and $d\ge\frac{{\rm e}^{2C}}{2^B}$. Thus $F$ is convex and $F'$ is ascending on $\langle d,\infty)$, and hence there is at most one $c\in(d,\infty)$ with $F'(c)=0$.
\item Either $F$ is ascending on $\langle d,\infty)$ or there is exactly one $c\in(d,\infty)$ such that $F$ is descending on $\langle d,c\rangle$ and ascending on $\langle c,\infty)$ (see (2) and (5)).
\item If $F(x_0)<0$ for some $x_0\in\langle d,\infty)$ then there is exactly one $k\in(d,\infty)$ such that $F(x)<0$ for $x\in\langle x_0,k)$ and $F(x)>0$ for $x\in(k,\infty)$ (see (6) and (1)).
\end{enumerate}
\end{remark}

\begin{lemma}\label{32}
Let $n$ be a positive integer.\newline
{\rm(i)} If the inequality $(1)$  holds then $n\le560$.\newline
{\rm(ii)} If $n\ge561$ then $x(n)>0$.
\end{lemma}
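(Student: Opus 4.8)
The plan is to prove part (ii), namely that $x(n)>0$ whenever $n\ge 561$, and then to read off (i): since the inequality (1) is just the assertion $x(n)<0$, part (ii) shows that (1) can hold only when $n\le 560$.

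For (ii) I would first extract three crude but entirely elementary bounds straight from the definitions (a)--(c). From the maximality of $z(n)$ one gets $3\bigl(z(n)+1\bigr)\ge 2n$, hence $z(n)\ge\frac23 n-1$. From $m(n)^2\le 2n$ one gets $m(n)\le\sqrt{2n}$. From the minimality of $r(n)$ one gets $2^{r(n)-1}<n$ for $n\ge 2$ (and $r(1)=0$), hence $r(n)<\log_2 n+1$. Since $m(n)>0$, substituting into $x(n)=z(n)-\bigl(r(n)+1\bigr)m(n)$ gives, for every positive integer $n$,
\[
x(n)\ \ge\ \frac23 n-1-(\log_2 n+2)\sqrt{2n}\ =\ F(n),
\]
where $F$ is precisely the function of Remark \ref{31} taken with the constants $A=-1$, $B=2$, $C=0$.

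It remains to check that $F(n)>0$ for all integers $n\ge 561$. For these constants the hypothesis of Remark \ref{31}(5) holds with $d=2$ (indeed ${\rm e}^{2C}/2^{B}=\frac14<2$), so $F$ is convex on $[2,\infty)$, $\lim_{x\to\infty}F(x)=\infty$, and $F(2)=\frac43-1-4-2<0$. Hence by Remark \ref{31}(7), applied with $x_0=2$, there is a unique $k\in(2,\infty)$ with $F<0$ on $[2,k)$ and $F>0$ on $(k,\infty)$. A direct numerical check gives $F(561)>0$, which forces $561>k$, and therefore $F(x)>0$ for every real $x\ge 561$; in particular $x(n)\ge F(n)>0$ for every integer $n\ge 561$. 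This proves (ii), and (i) follows.

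The only step that genuinely needs care is the numerical inequality $F(561)>0$. One has $F(561)=374-1-2\sqrt{1122}-\sqrt{1122}\,\log_2 561\approx 0.12$, so the zero $k$ of $F$ in fact lies strictly between $560$ and $561$ (and $F(560)<0$): the bound is tight, and to make it rigorous one needs sufficiently sharp rational estimates, e.g. $\sqrt{1122}<33.497$ together with $\log_2 561<9.1320$. Everything else is routine bookkeeping. As a variant, the closing appeal to Remark \ref{31}(7) may be replaced by the remark that $F'(561)>0$ and the convexity of $F$ on $[2,\infty)$ make $F$ increasing on $[561,\infty)$, so again $F(n)\ge F(561)>0$ there.
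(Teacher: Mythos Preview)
Your proof is correct and follows essentially the same approach as the paper: both introduce the function $F(x)=\tfrac23 x-1-(\log_2 x+2)\sqrt{2x}$ (the case $A=-1$, $B=2$, $C=0$ of Remark~\ref{31}), bound $x(n)$ below by $F(n)$ via the elementary estimates $z(n)\ge\tfrac23 n-1$, $m(n)\le\sqrt{2n}$, $r(n)<\log_2 n+1$, and then use the convexity argument of Remark~\ref{31}(7) together with the numerical check $F(561)>0$ (and $F(560)<0$) to conclude. The only cosmetic differences are that the paper takes $x_0=1$ while you take $x_0=2$, and the paper argues (i) directly from $F(n)<x(n)<0$ whereas you deduce it from (ii); neither changes the substance.
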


\begin{proof}
In \ref{31}, put $A=-1$, $B=2$ and $C=0$. Thus we obtain a function 
$$f(x)=\frac23x-1-\sqrt{2x}\log_2x-2\sqrt{2x}\,.$$
 We have $f(1)=\frac23-1-2\sqrt2<0$ and $f''(x)-\frac{\ln4x}{2\sqrt2x\sqrt x\ln2}>0$ for $x\in(1,\infty)$ (see \ref{31}(3) for $A=-1$, $B=2$, $C=0$). From \ref{31}(7) follows that there is exactly one $k\in(1,\infty)$ such that $f(x)<0$ for $x\in\langle1,k\rangle$ and $f(x)>0$ for $x\in(k,\infty)$. As $f(560)<0$ and $f(561)>0$, we have $k\in(560,561)$. 

If $n$ is a positive integer then $z(n)\ge \frac23n-1$, $\sqrt{2n}\ge m(n)$ and $\log_2n+1>r(n)$, and hence $f(n)=\frac23n-1-(\log_2n+2)\sqrt{2n}<z(n)-(r(n)+1)m(n)=x(n)$. If the inequality (1) holds then $f(n)<0$, and hence $n\le560$. On the other hand, if $n\ge 561$ then $0<f(n)<x(n)$.
\end{proof}

\begin{lemma}\label{33}
If $n$ is a positive integer such that $n\le384$ then the inequality {\rm(1)} holds.
\end{lemma}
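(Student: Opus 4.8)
The plan is to follow the pattern of Lemma \ref{32}, this time applying Remark \ref{31} with $A=B=C=1$, which produces
\[
F(x)=\tfrac23x+1-\sqrt{2x}+\log_2x-\sqrt{2x}\log_2x\,.
\]
First I would record that $x(n)<F(n)$ for every positive integer $n$. Indeed, $z(n)<\tfrac23n$ (from $3z(n)<2n$), $m(n)>\sqrt{2n}-1\ge\sqrt2-1>0$ (from $(m(n)+1)^2>2n$), and $r(n)+1\ge\log_2n+1>0$ (from $n\le2^{r(n)}$); hence $(r(n)+1)m(n)>(\log_2n+1)(\sqrt{2n}-1)$, and therefore $x(n)=z(n)-(r(n)+1)m(n)<\tfrac23n-(\log_2n+1)(\sqrt{2n}-1)=F(n)$. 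So it is enough to prove $F(n)<0$ for $4\le n\le384$; the three values $n\in\{1,2,3\}$ are read off the table in \ref{R1}, where $x(1)=-1$, $x(2)=-3$, $x(3)=-5$.

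For the range $4\le n\le384$ I would use the structural information in Remark \ref{31}. With $A=B=C=1$ one may take $d=4$ (since $4\ge2$ and $4\ge\mathrm e^{2}/2$), so by \ref{31}(5) the function $F$ is convex on $\langle4,\infty)$; and since $F(4)=\tfrac{17}{3}-6\sqrt2<0$, \ref{31}(7) (applied with $x_0=4$) yields a unique $k\in(4,\infty)$ with $F<0$ on $\langle4,k)$ and $F>0$ on $(k,\infty)$. Everything then reduces to the single inequality $F(384)<0$: it forces $384<k$, so every integer $n$ with $4\le n\le384$ lies in $\langle4,k)$, giving $F(n)<0$ and hence $x(n)<F(n)<0$.

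The one genuinely delicate point is this last numerical inequality, as $F(384)$ is only barely negative (its value is about $-0.04$). Using $384=2^{7}\cdot3$ one writes $\sqrt{2\cdot384}=16\sqrt3$ and $\log_2384=7+\log_23$, so that $F(384)=264+\log_23-16\sqrt3\,(8+\log_23)$; certified one-sided estimates such as $16\sqrt3>27.712$ (because $27.712^{2}<768$) together with $1.584<\log_23<1.585$ then make the sign plain. I expect that check to be essentially the whole of the work; the rest is a transcription of the argument already carried out for Lemma \ref{32}. One may note in passing that $F(385)>0$, so $384$ is exactly the largest cut-off this $F$ can deliver -- although, as the table in Construction \ref{R4} shows, $x(n)$ itself stays negative well past $n=384$.
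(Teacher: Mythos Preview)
Your argument is correct and is essentially the paper's own proof: the same choice $A=B=C=1$ in Remark~\ref{31}, the same comparison $x(n)<F(n)$ via $z(n)<\tfrac23 n$, $m(n)>\sqrt{2n}-1$, $r(n)+1\ge\log_2 n+1$, the same appeal to \ref{31}(7) on $[4,\infty)$, and the same decisive check $F(384)<0$. The only difference is cosmetic: the paper argues by contraposition (if $x(n)\ge0$ then $n\ge4$ by the table and $F(n)>0$, hence $n\ge385$), while you proceed directly and supply a certified numerical bound for $F(384)$ that the paper merely asserts.
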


\begin{proof}
In \ref{31}, put $A=B=C=1$. Thus we have a function
$$g(x)=\frac23x-(\log_2x+1)(\sqrt{2x}-1)=\frac23x-\sqrt{2x}\log_2x+\log_2x-\sqrt{2x}+1\,.$$
Then $g(4)=\frac83+3-3\sqrt8<0$ and $g''(x)=\frac{\sqrt x\ln2x-2\sqrt2}{2\sqrt2x^2\ln2}>0$ for $x\in(4,\infty)$ (see \ref{31}(3) for $A=B=C=1$). From \ref{31}(7) follows that there is exactly one $k\in(4,\infty)$ such that $g(x)<0$ for $x\in\langle4,k)$ and $g(x)>0$ for $x\in(k,\infty)$. However $g(384)<0$ and $g(385)>0$, and hence $k\in(384,385)$.

If $n$ is a positive integer then $z(n)<\frac23n$, $m(n)>\sqrt{2n}-1$ and $r(n)+1\ge\log_2n+1$. Thus $f(n)=\frac23n-(\log_2n+1)(\sqrt{2n}-1)>z(n)-(r(n)+1)m(n)=x(n)$. If $x(n)\ge0$ then $n\ge4$ (see \ref{R1}) and $g(n)>0$. Hence $n\ge 385$.
\end{proof}

\begin{remark} \rm With respect to \ref{32} and \ref{33}, it remains to examine $x(n)$ for $385\le n\le560$. We must investigate intervals where $x(n)$ is non-decreasing, i.e., intervals $g(i)\le n\le g(i+1)-1$. It is clear from \ref{R4} that we must investigate values $x(n)$ for $34\le i\le41$. If $i=34,35$ then $x(n)<0$ and if $i=38,39$ then $x(n)>0$.\newline
(i) For $i=36$ we have $x(n)=0$ iff $z(n)=290$ iff $n=436$.\newline
(ii) For $i=37$ (then $l_i=480$) we have $x(n)=0$ iff $z(n)=300$ iff $n=451$.\newline
(iii) For $i=40$ we have $x(n)=0$ iff $z(n)=352$ iff $n=529$.\newline
(vi) Finally, for $i=41$ we have $x(n)=0$ iff $z(n)=363$. As this is an odd number, we obtain $n=545$ or $n=546$.

 Now, the proof of \ref{T1} is finished.\qed\end{remark}

\begin{remark}\label{3R1} \rm For a positive integer $n$ put\begin{enumerate}
\item $Y(n)=2n-2z(n)+2-m(n)+(1-m(n))\log_2n$;
\item $d(n)=-m(n)+(1-m(n))\log_2n$.
\end{enumerate}
Clearly, the sequence $d(n)$ is decreasing, $Y(n)=c(n)+d(n)$ and the inequality (2) holds if and only if $Y(n)<0$. Further, it is easy to see that $Y(n)>0$ for $n\le4$ and $Y(n)<0$ for $5\le n\le10$.\end{remark}

\begin{lemma}\label{34} If $Y(n)<0$ then $5\le n\le379$.\end{lemma}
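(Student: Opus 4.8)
The plan is to imitate the proof of Lemma~\ref{32}: replace the integer quantity $Y(n)$ by a continuous lower bound drawn from the family $F$ of Remark~\ref{31}, show that this $F$ is eventually positive, and locate its last sign change. By Remark~\ref{3R1} we already have $Y(n)>0$ for $n\le4$, so it suffices to prove that $Y(n)\ge0$ for every integer $n\ge380$.

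First I would record three elementary estimates, each valid for every positive integer $n$: from $3z(n)\le2n-1$ one gets $c(n)=2n-2z(n)+2\ge\frac{2n+8}{3}\ge\frac23n+2$; from $m(n)^2\le2n$ one gets $m(n)\le\sqrt{2n}$; and, since $\log_2n\ge0$, the inequality $m(n)\le\sqrt{2n}$ also gives $(1-m(n))\log_2n\ge(1-\sqrt{2n})\log_2n$. Adding these, $Y(n)=c(n)-m(n)+(1-m(n))\log_2n\ge F(n)$, where $F$ is the function of Remark~\ref{31} with $A=2$, $B=1$, $C=1$, i.e.\ $F(x)=\frac23x+2-\sqrt{2x}+\log_2x-\sqrt{2x}\log_2x$.

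Next I would verify the hypotheses of Remark~\ref{31} for this $F$. With $B=C=1$ the requirement ``$d\ge2$ and $d\ge{\rm e}^{2C}/2^{B}={\rm e}^2/2$'' is met by $d=4$, so $F''>0$ on $\langle4,\infty)$ and $F$ is convex there by Remark~\ref{31}(5). Since $F(4)=\frac83+4-6\sqrt2<0$, Remark~\ref{31}(7) (taken with $x_0=4$) supplies a unique $k\in(4,\infty)$ with $F(x)<0$ on $\langle4,k)$ and $F(x)>0$ on $(k,\infty)$. A direct numerical check then gives $F(379)<0$ and $F(380)>0$, whence $k\in(379,380)$; consequently $F(x)>0$ for every real $x\ge380$, and therefore $Y(n)\ge F(n)>0$ for every integer $n\ge380$. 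Together with $Y(n)>0$ for $n\le4$ (Remark~\ref{3R1}) this yields: $Y(n)<0$ implies $5\le n\le379$.

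I expect the only delicate point to be the evaluation $F(380)>0$, since both $F(379)$ and $F(380)$ are close to zero; one needs reasonably sharp bounds for $\sqrt{760}$ and $\log_2380$, which a homemade table of powers of $2$ (as used elsewhere in the paper) supplies. If one wants a comfortable margin, the same argument run with the sharper constant $A=\frac83$ makes $F(380)$ plainly positive, at the price of the (still correct, merely less tight) bound $n\le378$; alternatively one may accept a somewhat larger finite window and trim it afterwards with the arithmetic table of Construction~\ref{R4}. Everything else — the three estimates and the appeal to Remark~\ref{31} — is routine.
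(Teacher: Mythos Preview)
Your argument is correct and essentially identical to the paper's own proof: the same choice $A=2$, $B=C=1$ in Remark~\ref{31}, the same estimates $c(n)>\tfrac23n+2$ and $m(n)\le\sqrt{2n}$ yielding $Y(n)>h(n)$, and the same numerical localization $h(379)<0<h(380)$. (Your closing commentary is harmless but not needed, and the phrase ``less tight'' for the bound $n\le378$ is backwards---that bound is \emph{tighter}.)
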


\begin{proof} In \ref{31} put $A=2$, $B=C=1$. We obtain a function
$$h(x)=\frac23x+2-\sqrt{2n}+\log_2x-\sqrt{2n}\log_2x\ .$$ 
Using \ref{31}(3),(7) for $A=2$, $B=1$, $C=1$, we see that $h(4)=\frac83+4-3\sqrt8<0$, $h''(x)=\frac{\sqrt x\ln2-2\sqrt2}{2\sqrt2x^2\ln2}$, $h''(x)>0$ for $x\in(4,\infty)$ and there is exactly one $k\in(4,\infty)$ such that $h(x)<0$ for $x\in\langle4,k)$ and $h(x)>0$ for $x\in(k,\infty)$. It is easy to verify that $h(379)<0$ and $f(380)>0$, and hence $k\in(379,389)$. 

As $3z(n)<2n$ and $m(n)^2\le2n$, $Y(n)>\frac23n+2-\sqrt{2n}+(1-\sqrt{2n})\log_2n=h(n)$ for every positive integer $n$. However $h(n)<Y(n)<0$, and hence $n\le379$. Finally, $Y(n)>0$ for $n\le4$.
\end{proof}

\begin{lemma}\label{35} If $Y(n)\ge0$ then either $1\le n\le 4$ or $n\ge 325$.\end{lemma}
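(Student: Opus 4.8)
The plan is to follow the pattern of Lemmas \ref{33} and \ref{34}: I instantiate the function of Remark \ref{31} with constants $A,B,C$ chosen so that the resulting continuous function $p$ satisfies $p(n)>Y(n)$ for every integer $n$, and then I study the sign of $p$ on the range in question. Concretely, putting $A=5$, $B=1$, $C=2$ in \ref{31} gives
$$p(x)=\frac23x+5-\sqrt{2x}+2\log_2x-\sqrt{2x}\log_2x\,.$$
The key comparison $Y(n)<p(n)$, valid for every integer $n\ge2$, rests on three elementary estimates. Since $z(n)$ is the largest integer with $3z(n)<2n$, we have $3(z(n)+1)\ge2n$, hence $c(n)=2n-2z(n)+2\le\frac23n+4$. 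Since $m(n)^2\le2n<(m(n)+1)^2$, we have $m(n)>\sqrt{2n}-1$, hence $-m(n)<1-\sqrt{2n}$; and multiplying $1-m(n)<2-\sqrt{2n}$ by $\log_2n\ge0$ gives $(1-m(n))\log_2n<2\log_2n-\sqrt{2n}\log_2n$ for $n\ge2$. Adding the three estimates yields $Y(n)=c(n)-m(n)+(1-m(n))\log_2n<p(n)$.

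Next I examine the sign of $p$. By \ref{31}(1), $p(x)\to\infty$ as $x\to\infty$. By \ref{31}(3), $p''(x)=\bigl(\sqrt x\,\ln(2x)-4\sqrt2\bigr)/\bigl(2\sqrt2\,x^2\ln2\bigr)$; since $x\mapsto\sqrt x\,\ln(2x)$ is increasing on $\langle1,\infty)$ and $\sqrt9\,\ln18=3\ln18>6>4\sqrt2$, it follows that $p''(x)>0$ on $\langle9,\infty)$, so $p$ is convex there. A careful computation then gives $p(9)<0$ and $p(324)<0$, and convexity forces $p(x)<0$ for all $x\in\langle9,324\rangle$, since a convex function does not exceed the larger of its two endpoint values on any subinterval. (Alternatively one may invoke \ref{31}(7) with $x_0=9$ to get a unique root $k>9$, whereupon $p(324)<0$ shows $k>324$.) Consequently $Y(n)<p(n)<0$ for every integer $n$ with $9\le n\le324$.

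To conclude, suppose $Y(n)\ge0$. If $n\le4$ we are done. If $5\le n\le8$, then $Y(n)<0$ by \ref{3R1}, a contradiction; and if $9\le n\le324$, then $0\le Y(n)<p(n)<0$, again a contradiction. Hence $n\ge325$, which is the assertion.

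I expect the only genuine difficulty to be the verification $p(324)<0$: here $p(324)\approx-0.07$, so crude estimates do not even settle the sign, and one needs sharp rational approximations to $\sqrt2$ (to control $\sqrt{648}=18\sqrt2$) and to $\log_23$ (to control $\log_2324=2+4\log_23$). A minor technical point is that Remark \ref{31}(5) only certifies convexity of $p$ beyond ${\rm e}^{4}/2\approx27.3$, which is why I establish convexity on $\langle9,\infty)$ directly from the formula for $p''$; the alternative — keeping the threshold of \ref{31}(5) and checking $Y(n)<0$ by hand for $11\le n\le27$ — is no less elementary, just more laborious. Everything else (the three estimates giving $Y(n)<p(n)$, the value $p(9)<0$, and the convexity bookkeeping) is routine.
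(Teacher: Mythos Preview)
Your proof is correct and follows essentially the same route as the paper: the same choice $A=5$, $B=1$, $C=2$ in Remark~\ref{31}, the same three estimates yielding $Y(n)<p(n)$, and the same sign analysis via convexity on $\langle9,\infty)$ together with the numerical checks $p(9)<0$ and $p(324)<0$. Your added remarks on why convexity must be checked directly at $9$ (rather than via the generic bound in \ref{31}(5)) and on the delicacy of the value $p(324)\approx-0.07$ are accurate and useful commentary, but the argument itself matches the paper's.
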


\begin{proof} Suppose, on the contrary, that $Y(n)\ge0$. In \ref{31} put $A=5$, $B=1$, $C=2$. we obtain a function a function 
$$k(x)=\frac23x+5-\sqrt{2x}+2\log_2x-\sqrt{2x}\log_2x\ .$$
By \ref{31}(3),(7) for $A=5$, $B=1$ and $C=2$, we have $k(9)=11-\sqrt{18}+2\log_29-\sqrt{18}\log_29<0$, $k''(x)=\frac{\sqrt x\ln2x-4\sqrt2}{2\sqrt2x^2\ln2}$, $k''(x)>0$ for $x\in(9,\infty)$ and there is exactly one $k\in(9,\infty)$ such that $k(x)<0$ for $x\in\langle9,k)$ and $k(x)>0$ for $x\in(k,\infty)$. However $f(324)<0$, $f(325)>0$, and hence $k\in(324,325)$.
As $3(z(n)+1)\ge2n$ and $(m(n)+1)^2>2n$, we have $z(n)\ge\frac23n-1$, $m(n)>\sqrt{2n}-1$ and $Y(n)=2(n-z(n))+2-m(n)+(1-m(n))\log_2n<\frac23n+5-\sqrt{2n}+2\log_2n-\sqrt{2n}\log_2n=k(n)$ for every positive integer $n$. Taking into account \ref{3R1}, we have $k(n)>Y(n)\ge0$, and hence either $n\le4$ or $n\ge325$. 
\end{proof}

\begin{lemma} $Y(n)\ne0$ for every positive integer $n$.\end{lemma}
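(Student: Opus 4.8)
The plan is to combine the two previous lemmas with a small finite check, exactly mirroring the structure of the arithmetic proof of Theorem \ref{T2}(ii). By Lemma \ref{34}, if $Y(n)=0$ (which in particular gives $Y(n)<0$ is false but $Y(n)\ge 0$, so actually we must be careful) — more precisely, $Y(n)=0$ entails both $Y(n)\ge 0$ and $Y(n)\not<0$. From Lemma \ref{35} applied with $Y(n)\ge 0$ we get $n\le 4$ or $n\ge 325$, and from Remark \ref{3R1} we know $Y(n)>0$ strictly for $n\le 4$, so those are excluded and we are left with $n\ge 325$. On the other hand $Y(n)=0$ does not satisfy $Y(n)<0$, but Lemma \ref{34} only bounds the range where $Y(n)<0$; to pin down an upper bound I instead recall that $Y(n)=c(n)+d(n)$ with $d(n)$ strictly decreasing and $c(n)$ non-decreasing with the explicit form $c(n+3k)=c(n)+2k$, so $Y(n)$ eventually grows without bound and in fact $Y(n)>0$ for all large $n$; a cleaner route is simply to invoke Theorem \ref{T2}(ii) together with the observation (in Remark \ref{3R1}) that the inequality (2), i.e. $y(n)<0$, is equivalent to $Y(n)<0$, and likewise $y(n)=0 \iff Y(n)=0$ is \emph{not} literally true because $y$ is the integer $a(n)-b(n)$ while $Y(n)$ is its ``logarithmic'' surrogate $\log_2 a(n)-\log_2 b(n)$. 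Hence $Y(n)=0$ really does mean $a(n)=b(n)$, i.e. $y(n)=0$, which Theorem \ref{T2}(ii) rules out outright.

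So the first step I would take is to make precise the equivalence $Y(n)=0 \Longleftrightarrow y(n)=0$: since $Y(n)=2n-2z(n)-m(n)+2+(1-m(n))\log_2 n = \log_2\bigl(2^{c(n)-m(n)}\bigr)-\log_2\bigl(n^{m(n)-1}\bigr)=\log_2 a(n)-\log_2 b(n)$, and $\log_2$ is injective on the positive reals, $Y(n)=0$ holds exactly when $a(n)=b(n)$, that is, when $y(n)=a(n)-b(n)=0$. Second, I would simply cite Theorem \ref{T2}(ii), which states $y(n)\ne 0$ for every positive integer $n$. Combining the two gives $Y(n)\ne 0$ for every positive integer $n$, which is the claim.

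If instead one wants a proof that stays on the ``calculus side'' and does not lean on Theorem \ref{T2}, the alternative is: by Lemma \ref{35} (using $Y(n)\ge 0$) together with Remark \ref{3R1}, an equality $Y(n)=0$ forces $n\ge 325$; and one needs a matching upper bound. For that I would note that for $n\ge 325$ we have $r(n)\ge 9$ hence $n^{m(n)-1}\le 2^{r(n)(m(n)-1)}$, and also $m(n)\ge 25$; then a short estimate in the spirit of Lemma \ref{L5} shows $c(n)-m(n)$ and $r(n)(m(n)-1)$ can only coincide in a narrow window of $n$, which one then checks by hand using the explicit values of $c(n)$, $m(n)$, $r(n)$ from Construction \ref{R4}. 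This reduces everything to finitely many $n$ in, say, $325\le n\le 403$, each of which is dispatched by comparing a power of $2$ with a power of $n$ exactly as in parts (vi)--(ix) of the proof of Theorem \ref{T2}.

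The main obstacle, such as it is, is purely bookkeeping: establishing the clean dictionary between $Y(n)$ and $y(n)$ (so that the integer non-vanishing result transfers to the real-valued surrogate) and then making sure the two one-sided bounds from Lemmas \ref{34} and \ref{35} genuinely sandwich the equality case into a finite, already-resolved range. I expect the write-up to be three or four lines once the identity $Y(n)=\log_2 a(n)-\log_2 b(n)$ is recorded; the only place to be slightly careful is that Lemma \ref{34} is phrased for strict inequality $Y(n)<0$, so to get an upper bound for the equality case I should either route through the $y(n)$ equivalence and Theorem \ref{T2}(ii) directly, or re-derive the bound $n\le 403$ from the growth of $c(n)$ as above.
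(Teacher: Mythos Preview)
Your primary route --- recording $Y(n)=\log_2 a(n)-\log_2 b(n)$, hence $Y(n)=0\Longleftrightarrow a(n)=b(n)\Longleftrightarrow y(n)=0$, and then invoking Theorem~\ref{T2}(ii) --- is logically correct and would indeed be a three-line proof. It is, however, a genuinely different argument from the paper's, and it undercuts the purpose of Section~3: that section is meant to furnish an \emph{independent} calculus-based derivation of Theorem~\ref{T2}, and this lemma is one of its building blocks. Citing Theorem~\ref{T2}(ii) here makes the alternative proof of Theorem~\ref{T2} rely on the arithmetic proof from Section~2, so while not formally circular, it defeats the point.

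The paper's own proof stays within Section~3 and is considerably slicker than the finite check you sketch as your fallback. First it sandwiches $n$: from Lemma~\ref{35} (using $Y(n)\ge 0$) together with Remark~\ref{3R1} one gets $n\ge 325$; and from the inequality $Y(n)>h(n)$ established in the proof of Lemma~\ref{34}, $Y(n)=0$ forces $h(n)<0$, hence $n\le 379$. (You correctly noticed that the \emph{statement} of Lemma~\ref{34} does not cover the equality case; the paper reaches back into its proof for the needed bound.) Now comes the key observation you missed: in
\[
Y(n)=2(n-z(n))+2-m(n)+(1-m(n))\log_2 n=0
\]
every term except $\log_2 n$ is an integer, so $\log_2 n=\dfrac{2(n-z(n))+2-m(n)}{m(n)-1}$ is rational. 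Since $n$ is a positive integer, a rational $\log_2 n$ forces $n$ to be a power of~$2$. But $2^8<325\le n\le 379<2^9$, a contradiction. No power comparisons, no case analysis, and no appeal to Section~2 are needed.
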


\begin{proof} Suppose, on the contrary, that $n$ is such that $Y(n)=0$. Taking into account \ref{31} and \ref{35}, we have $n\ge325$ and $k(n)<0$, hence $n\le379$. As $Y(n)=2(n-z(n))+2-m(n)+(1-m(n))\log_2n=0$, $q=\log_2n=\frac{2(n-z(n))+2-m(n)}{m(n)-1}$ is a~rational number. However $n=2^q$ is an integer, hence $q$ is also an integer. As $2^8<325\le n\le379<2^9$, we have a contradiction.
\end{proof}

\begin{remark}\rm If $5\le n\le324$ then $Y(n)<0$ by \ref{35}. With respect to \ref{34}, it remains to determine the sign of $Y(n)$ for $325\le n\le379$. 
We will distinguish several cases.\newline
(i) If $325\le n\le 332$ then $c(n)\le c(325)+4$, and hence $Y(n)<Y(325)+4$. However $Y(325)\,(\approx -5.26)<-5$ and $y(n)<0$.\newline
(ii) If $333\le n\le335)$ then $c(n)=c(333)$, and hence $Y(n)\ge Y(337)\,(\approx1.48)>0$.\newline
(iii) If $338\le n\le350$ then $c(n)\le c(338)+8$ and $Y(n)\le Y(338)\,(\approx-8.02)+8<0$.\newline 
(iv) If $351\le n\le353$ then $c(n)=c(353)$ and $Y(n)\ge Y(353)\,(\approx0.41)>0$.\newline 
(v) If $354\le n\le 364$ then $c(n)\ge c(354)=240$ and $d(n)\ge d(364)\,(\approx-238.7)>-239$, and hence $Y(n)>1$.\newline
(vi) If $365\le n\le368$ then $c(n)\le\c(365)+2$ and $Y(n)\le Y(365)\,(\approx-2.4)+2<0$.\newline
(vii) If $369\le n\le371$ then $c(n)=c(369)$ and $Y(n)\ge Y(371)\,(\approx1.08)>0$.\newline
(viii) Finally, let $371\le n\le379$. Then $m(n)=27$ (by the table in \ref{R4}, this is true till $n=391$) and $d(n)=-27-26\log_2n$. Define $\Delta Y(n)=Y(n+3)-Y(n)$ and $\Delta d(n)=d(n+3)-d(n)$. As $c(n+3)=c(n)+2$, we have $\Delta Y(n)=2+\Delta d(n)$. If $n+3\le391$ then the sequence $\Delta d(n)=26\log_2\frac n{n+3}$ is increasing and $\Delta d(371)=26\log_2\frac{371}{374} ,(\approx-0.3)>-0.4$. Thus $\Delta Y(n)\ge2-0.4>0$. If $i$ is non-negative and $372+3i+2\le391$ then (as $(c(372+3i)=c(372+3i+1)=c(372+3i+2)$) $Y(372+3i)>Y(372+3i+2)=Y(371+3(i+1))\ge Y(371)+(i+1)\Delta Y(371)>Y(371)>0$.

Now, the proof of \ref{T2} is finished.\qed
\end{remark}

\end{document}